\documentclass[12pt,reqno]{amsart}
\usepackage{amsmath, amssymb, amsthm}

\renewcommand{\(}{\left\(}
\renewcommand{\)}{\right\)}
\renewcommand{\[}{\left\[}
\renewcommand{\]}{\right\]}

\numberwithin{equation}{section}
 \theoremstyle{plain}
\newtheorem{theorem}{Theorem}[section]

\newtheorem{remark}[]{Remark}

\newtheorem{conjecture}[theorem]{Conjecture}

\newtheorem{corollary}[theorem]{Corollary}

   \makeatletter
\def\proof{\@ifnextchar[{\@oproof}{\@nproof}}
\def\@oproof[#1][#2]{\trivlist\item[\hskip\labelsep\textit{#2 Proof of\
#1.}~]\ignorespaces}
\def\@nproof{\trivlist\item[\hskip\labelsep\textit{Proof.}~]\ignorespaces}

\makeatother
\begin{document}
\title[On the number of missing integers in partitions]{On the number of missing integers in partitions}
\author{Subhash Chand Bhoria}
\address{Subhash Chand Bhoria, Department of Mathematics, Pt. Chiranji Lal Sharma Government College, Urban Estate, Sector-14, Karnal, Haryana - 132001, India.}
\email{scbhoria89@gmail.com}

\author{Pramod Eyyunni}
\address{Pramod Eyyunni, Department of Mathematics,
Birla Institute of Technology and Science Pilani, Vidya vihar, Pilani, Rajasthan - 333031, India.} 
\email{pramod.eyyunni@pilani.bits-pilani.ac.in}
%

\author{Subhrangsu Santra}
\address{Subhrangsu Santra, Department of Mathematics,
Birla Institute of Technology and Science Pilani, Vidya vihar, Pilani, Rajasthan - 333031, India.} 
\email{p20250069@pilani.bits-pilani.ac.in, subhasantra333@gmail.com}

\thanks{$2020$ \textit{Mathematics Subject Classification.} Primary 11P81, 11P83, 11P84; Secondary 05A17. \\
\textit{Keywords and phrases.} Minimal excludant, mex, Missing integers, Partition congruences, Bias type inequalities, Overpartitions}

\begin{abstract}
In the preceding decade, Andrews and Newman resurrected the concept of a `minimal excludant' of a partition ($mex$, for short), namely, the least positive missing integer in a partition. Subsequently, several authors have not only studied its generalizations, analogues and the like but also connected the mex to several important partition statistics. In the present paper, we study the set of missing positive integers as a whole, in two different classes of partitions, namely, unrestricted partitions and overpartitions. To be precise, a $missing \ integer$ is a positive integer that is less than the largest part of a partition and which does not occur as a part. In particular, we examine the number of partitions with a given number of missing integers, determine congruences for two pairs of functions associated to them, and propose three bias type inequality conjectures for these functions.
\end{abstract}  
\maketitle
\section{Introduction}
Many statistics have been attached to integer partitions of a positive integer such as the number of parts, largest part, rank, crank, minimal excludant ($mex$) and so on. We pay particular attention to the last named of the above list. Grabner and Knopfmacher \cite{grabner} introduced the mex under the name `least gap' of a partition, namely, the smallest positive integer that is not a part of the partition. They mainly studied its analytic properties, but the concept really took off in the second half of the last decade due to two works of Andrews and Newman \cite{andrewsnewmanI, andrewsnewmanII}. They examined the mex from a combinatorial standpoint and connected it to the rank and crank statistics. This inspired many mathematicians to work on the generalizations, analogues etc. of the mex and its relation to more partition statistics. 

In this work, we explore a relatively new partition statistic closely related to mex, that is, the number of missing integers in a partition, which are greater than or equal to $j$. By $missing \ integers$ in a partition, we mean those positive integers that are less than the largest part of a partition and which do not occur as a part. For example, we can see that the missing integers greater than or equal to $1$ in the partition $9+9+7+5+2$ of $32$ are $1, 3, 4, 6$ and $8$, totalling five in number. So the mex is the smallest missing integer of a partition except for the case of gap-free partitions, partitions in which every positive integer less than the largest part appears as a part, where there are no missing integers but the mex is still defined (one more than the largest part). Missing integers of a partition recently appeared in the work of Campioni \cite{Campioni}, in the context of studying structural connections between unrefinable partitions into distinct parts and numerical semigroups.

A part of our motivation comes from the generalized mex statistic $mex_j(\pi)$ of a partition, introduced by Hopkins, Sellers, and Stanton \cite{HoSeStan2022}. The number $mex_j(\pi)$ is the least positive integer greater than or equal to $j$ missing from the partition $\pi$. As mentioned before, there is now a rich collection of results on mex, studied by many mathematicians. The interested reader can refer to \cite{andrewsnewmanI, andrewsnewmanII, ballantine-merca, barman-singh, BBEM2023, KBEM2021, BEL22, BEM22, Chakraborty-Ray, sellers-dasilva} for more information. On similar lines, Chern \cite{chernmaex} examined the maximal excludant ($maex$) of a partition, the largest integer less than the largest part that does not occur as a part of the partition. As we can see, both mex and maex come under the category of missing integers (except for gap-free partitions).

The meat of the paper is divided into two principal sections. The first one deals with missing integers in unrestricted partitions, while also extending this concept to integers with multiplicity less than $k$. As regards the second section, it contains investigations on missing integers in overpartitions analogous to those in the first section. We end with a concluding section on possible directions of further inquiry.

\section{Missing Integers in Unrestricted Partitions}
In the present section, we attempt to count the number of partitions of a number $n$ in which exactly $m$ positive integers less the largest part but greater than or equal to $j$ are missing, i.e., partitions of $n$ with $m$ missing integers $\geq j$. We denote this number by $P_j(n,m)$, where $j \in \mathbb{N}$. For example, with $n=5, \ m=1$ and $j=1$, we have $P_1(5,1)=2$ as $3+2$ and $3+1+1$ are the concerned partitions.   

Throughout the paper we shall assume $|q|<1$ and adopt the following $q$-product notation: For $a \in \mathbb{C}, \ n \in \mathbb{N}$, we define
\begin{align*}
(a;q)_n &=(1-a)(1-aq)\cdots(1-aq^{n-1}); \\
(a;q)_{\infty}&=\prod_{j=1}^{\infty} (1-aq^{j-1}).
\end{align*}
\subsection{Main Results}
\begin{theorem} \label{P_j(n,m)GFN}
For $w \in \mathbb{C}$, we have
 \begin{align}
\sum_{n=1}^{\infty}\sum_{m=0}^{\infty} P_j(n,m)w^mq^n=\frac{w^{-j}}{(\frac{w-1}{w};q)_j}\left(\frac{((w-1)q;q)_{\infty}}{(wq;q)_{\infty}}-\sum_{\ell=0}^{j-1}\frac{(\frac{w-1}{w};q)_{\ell}}{(q;q)_\ell}(wq)^\ell\right).\label{W_GFN_U}
\end{align}   
\end{theorem}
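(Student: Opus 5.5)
The plan is to build the generating function by conditioning on the largest part $L$ and reading off, integer by integer, the contribution of each value below $L$. Put $a=\frac{w-1}{w}$, so that $(a;q)_\ell=(\frac{w-1}{w};q)_\ell$ in the notation of Theorem~\ref{P_j(n,m)GFN}. Fix $L\ge j$; this is the range in which missing integers $\ge j$ can occur.
\begin{itemize}
\item Each integer $i<j$ may occur with any multiplicity and is never counted, giving the factor $1/(1-q^i)$.
\item Each integer $i$ with $j\le i\le L-1$ is either absent, contributing $w$, or present, contributing $q^i/(1-q^i)$. Its total weight is therefore
\[
w+\frac{q^i}{1-q^i}=\frac{w\bigl(1-aq^i\bigr)}{1-q^i}.
\]
\item The largest part $L$ occurs at least once, giving $q^L/(1-q^L)$.
\end{itemize}
Multiplying these factors, the partitions with largest part $L$ contribute
\[
\frac{w^{L-j}q^L\,(aq^j;q)_{L-j}}{(q;q)_L}=\frac{w^{-j}}{(a;q)_j}\cdot\frac{(a;q)_L}{(q;q)_L}(wq)^L ,
\]
where we used $(aq^j;q)_{L-j}=(a;q)_L/(a;q)_j$. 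At $L=j$ this reduces to $q^j/(q;q)_j$, which is the correct count, since such partitions have no missing integers $\ge j$.

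Next, sum over $L\ge j$ by completing to the full range $L\ge 0$ and subtracting the terms $0\le L\le j-1$. The $q$-binomial theorem
\[
\sum_{L\ge0}\frac{(a;q)_L}{(q;q)_L}z^L=\frac{(az;q)_\infty}{(z;q)_\infty},
\]
applied with $z=wq$, gives $az=(w-1)q$. Hence the full sum equals $((w-1)q;q)_\infty/(wq;q)_\infty$. Subtracting the finite sum over $\ell=0,\dots,j-1$ then yields exactly the right-hand side of \eqref{W_GFN_U}. Convergence is clear for $|q|<1$ and $|wq|<1$, and the identity extends to all $w\in\mathbb{C}$ as an identity of formal power series in $q$.

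The main point to check is the treatment of partitions whose largest part is smaller than $j$. Such partitions have no missing integers $\ge j$, so they would contribute $\sum_{L=1}^{j-1}q^L/(q;q)_L$, which does not appear on the right-hand side of \eqref{W_GFN_U}. For example, when $j=2$ the right-hand side has zero coefficient of $q^1$. So I expect the statement to be intended for partitions with largest part $\ge j$, equivalently with $P_j(n,m)$ counting only those partitions. When writing the proof I would make this convention explicit. If the paper instead intends all partitions, the correction term $\sum_{L=1}^{j-1}q^L/(q;q)_L$ would have to be added.

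Beyond this bookkeeping, the only other delicate step is the algebraic rewriting of $w+q^i/(1-q^i)$ in $q$-product form; the rest is the routine application of the $q$-binomial theorem.
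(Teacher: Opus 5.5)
Your argument is correct and is essentially the paper's proof. The paper also fixes the largest part $\ell$ and writes the product $\frac{1}{(q;q)_{j-1}}\prod_{i=j}^{\ell-1}\bigl(w+\frac{q^i}{1-q^i}\bigr)\frac{q^\ell}{1-q^\ell}$, rewrites it as $\frac{w^{-j}}{(\frac{w-1}{w};q)_j}\frac{(\frac{w-1}{w};q)_\ell}{(q;q)_\ell}(wq)^\ell$, and evaluates the tail $\ell\ge j$ of the $q$-binomial series. Your caveat is also right: the paper likewise sums only over $\ell\ge j$, so the identity as stated silently counts only partitions with largest part at least $j$ (for instance, with $j=2$ the partition $1$ of $n=1$ is omitted), and stating that convention explicitly is the correct fix.
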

A special case of the above identity, namely for $j=1$, gives rise to a closed product form on the right hand side as given below. Denoting $P_1(n,m)$ by $P(n,m)$ and assuming $P(0,0)=1$, we have
\begin{corollary}\label{P_1(n,m)GFN}
Let $P(n,m)$ be the number of partitions of $n$ in which exactly $m$ positive integers less than the largest part are missing. Then
\begin{align}
\sum_{n=0}^{\infty}\sum_{m=0}^{\infty} P(n,m)w^mq^n=\frac{\big((w-1)q;q\big)_{\infty}}{(wq;q)_{\infty}}.\label{Main_GFN}
\end{align}
\end{corollary}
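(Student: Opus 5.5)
The plan is to deduce the corollary from Theorem \ref{P_j(n,m)GFN} by setting $j=1$, and to include a short direct combinatorial check as a safeguard.

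\emph{Specialisation.} With $j=1$ the prefactor in \eqref{W_GFN_U} is
$$\frac{w^{-1}}{\left(\frac{w-1}{w};q\right)_1}=\frac{w^{-1}}{1-\frac{w-1}{w}}=\frac{w^{-1}}{1/w}=1.$$
The finite sum contains only the $\ell=0$ term, which equals $1$. Hence the right-hand side of \eqref{W_GFN_U} becomes
$$\frac{((w-1)q;q)_\infty}{(wq;q)_\infty}-1.$$
The left-hand side is summed over $n\ge 1$. Adding the term $n=0$ with the convention $P(0,0)=1$ (the empty partition, with no missing integers) contributes exactly $1$ and yields \eqref{Main_GFN}.

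\emph{Direct check.} To confirm the product independently of the theorem, I would classify partitions by their largest part $N$. Each integer $i\in\{1,\dots,N-1\}$ either appears, contributing $q^i/(1-q^i)$, or is missing, contributing $w$. The part $N$ itself contributes $q^N/(1-q^N)$. This gives
$$1+\sum_{N\ge1}\frac{q^N}{1-q^N}\prod_{i=1}^{N-1}\left(w+\frac{q^i}{1-q^i}\right)=1+\sum_{N\ge1}\frac{q^N}{1-q^N}\prod_{i=1}^{N-1}\frac{w-(w-1)q^i}{1-q^i}.$$
Dividing the factor $w-(w-1)q^i$ by $w$ gives $1-\frac{w-1}{w}q^i$. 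The sum therefore becomes
$$1+\sum_{N\ge1}\frac{\left(\frac{w-1}{w}q;q\right)_{N-1}}{(q;q)_N}\,w^{N-1}q^N.$$
After shifting the index, this telescopes into the $q$-binomial-theorem series
$$\sum_{N\ge0}\frac{(a;q)_N}{(q;q)_N}z^N=\frac{(az;q)_\infty}{(z;q)_\infty},$$
with $a=\frac{w-1}{w}$ and $z=wq$. This produces $((w-1)q;q)_\infty/(wq;q)_\infty$.

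\emph{Main obstacle.} The only point needing care is the telescoping step, namely writing $(a;q)_N=(a;q)_{N-1}(1-aq^{N-1})$ and absorbing the boundary term. I expect this to be routine, since the specialisation alone already suffices to prove the corollary.
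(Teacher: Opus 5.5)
Your specialisation argument is exactly the paper's proof: setting $j=1$ in Theorem~\ref{P_j(n,m)GFN} collapses the prefactor to $1$ and the finite sum to its single term $1$, and adding $P(0,0)=1$ for the empty partition gives \eqref{Main_GFN}. In your optional direct check, the last step is not a telescoping via $(a;q)_N=(a;q)_{N-1}(1-aq^{N-1})$; it is simply $(aq;q)_{N-1}=(a;q)_N/(1-a)=w\,(a;q)_N$ with $a=\frac{w-1}{w}$, which turns each term into $\frac{(a;q)_N}{(q;q)_N}(wq)^N$, and the leading $1$ is the $N=0$ term of the $q$-binomial series.
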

Setting $w=0$ gives the following well-known identity.
\begin{corollary} Let $P(n,0)$ be the number of partitions of $n$ in which no integer less than the largest part is missing, that is, gap free partitions, usually denoted by $\mathcal{P}^*(n)$. We thus obtain
\begin{align*}
\sum_{n=0}^{\infty} \mathcal{P}^*(n)q^n=(-q;q)_{\infty}.
\end{align*}
\end{corollary}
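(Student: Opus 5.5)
The statement to prove is the last corollary: setting $w=0$ in Corollary~\ref{P_1(n,m)GFN} gives the generating function for gap-free partitions. The plan is to specialize \eqref{Main_GFN} at $w=0$ and then check that the left side really counts $\mathcal{P}^*(n)$.

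First, the left side. As a power series in $w$ (for fixed $n$, only finitely many $m$ contribute), setting $w=0$ kills every term with $m\ge 1$, leaving $\sum_{n\ge0}P(n,0)q^n$. By definition $P(n,0)$ counts the partitions of $n$ with no missing integer below the largest part, which are exactly the gap-free partitions, so $P(n,0)=\mathcal{P}^*(n)$. The convention $P(0,0)=1$ matches $\mathcal{P}^*(0)=1$ for the empty partition.

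Next, the right side. At $w=0$ the factor $((w-1)q;q)_\infty$ becomes $(-q;q)_\infty$, and $(wq;q)_\infty$ becomes $(0;q)_\infty=1$. Both products converge uniformly for $|w|$ bounded and $|q|<1$, and the denominator is nonvanishing near $w=0$. So the right side is analytic in $w$ there, and substituting $w=0$ is legitimate. This gives the claimed identity.

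As an independent sanity check, a bijective argument confirms the result. A gap-free partition with largest part $L$ uses each of $1,\dots,L$ at least once. Removing one copy of each of $1,\dots,L$ leaves a partition into parts at most $L$, whose generating function is $q^{L(L+1)/2}/(q;q)_L$. Alternatively, take the conjugate: a gap-free partition is conjugate to a partition into distinct parts, since consecutive part sizes differ by at most one exactly when the conjugate's parts are distinct. Conjugation therefore gives the bijection with distinct-part partitions, whose generating function is $(-q;q)_\infty$.

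There is no real obstacle. The only point needing care is justifying the substitution $w=0$, which the convergence argument above handles.
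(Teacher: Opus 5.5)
Your proposal is correct and is exactly the paper's argument: specialize \eqref{Main_GFN} at $w=0$, so the left side collapses to $\sum_{n\ge 0}P(n,0)q^n=\sum_{n\ge 0}\mathcal{P}^*(n)q^n$ and the right side becomes $(-q;q)_\infty/(0;q)_\infty=(-q;q)_\infty$. One small slip in your optional conjugation check: ``consecutive parts differ by at most one'' characterizes gap-free partitions only if you also require the smallest part to be $1$ (equivalently, append a trailing part $0$); for example, $3+2$ satisfies the former condition but is not gap-free, and its conjugate $2+2+1$ does not have distinct parts.
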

Performing differentiation with respect to $w$ in \eqref{Main_GFN} and putting $w=0$ yields the generating function for the coefficients corresponding to $w^1$, that is, $P(n,1)$.
\begin{corollary}\label{P_1(n,1)GFN}
We have the following identity:
\begin{align*}
\sum_{n=0}^{\infty}P(n,1)q^n=(-q;q)_{\infty}\sum_{k=1}^{\infty}\frac{q^{2k}}{1+q^k}.
\end{align*}
Since $P(n,1)$ is the number of partitions of $n$ in which exactly one integer is missing, this can also be interpreted as the number of partitions of $n$ whose \emph{mex} and \emph{maex} coincide. Furthermore, from the right hand side above, this quantity is equinumerous with the number of partitions of $n$ in which exactly one part repeats twice, while all other parts occur only once, see OEIS[A090858] \cite{OEIS}.
\end{corollary}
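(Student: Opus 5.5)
The plan is to start from Corollary \ref{P_1(n,m)GFN}, differentiate \eqref{Main_GFN} once with respect to $w$, and then set $w=0$. Write
\begin{align*}
F(w,q)=\frac{((w-1)q;q)_{\infty}}{(wq;q)_{\infty}}=\prod_{k=1}^{\infty}\frac{1-(w-1)q^k}{1-wq^k}.
\end{align*}
At $w=0$ each factor becomes $1+q^k$, so $F(0,q)=(-q;q)_\infty$; this recovers the gap-free corollary. Since $P(n,1)$ is the coefficient of $w^1$, we have $\sum_n P(n,1)q^n=\partial_w F(w,q)\big|_{w=0}$.

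To compute this derivative I will use the logarithmic derivative:
\begin{align*}
\frac{\partial_w F}{F}=\sum_{k=1}^{\infty}\left(\frac{-q^k}{1-(w-1)q^k}+\frac{q^k}{1-wq^k}\right).
\end{align*}
At $w=0$ the $k$-th summand equals $q^k-\frac{q^k}{1+q^k}=\frac{q^{2k}}{1+q^k}$. Multiplying by $F(0,q)=(-q;q)_\infty$ gives the stated identity.

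\textbf{Justification of the analytic steps.} For $|q|<1$ and $|w|$ small, the product converges uniformly, so termwise logarithmic differentiation is legitimate. Equivalently, one can treat everything as formal power series in $q$, whose coefficients are polynomials in $w$, since only finitely many partitions of $n$ exist. This step is routine.

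\textbf{The combinatorial remark.} I would write
\begin{align*}
(-q;q)_\infty\frac{q^{2k}}{1+q^k}=q^{2k}\prod_{i\neq k}(1+q^i).
\end{align*}
The right side generates partitions in which the part $k$ occurs exactly twice and every other part occurs at most once. Summing over $k$ counts partitions with exactly one part appearing twice and all other parts distinct.

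The mex--maex reinterpretation also needs a check. If exactly one integer below the largest part is missing, then that integer is simultaneously the least and the largest missing integer, so mex $=$ maex. Conversely, if mex $=$ maex, then mex is less than the largest part (because maex is), so that missing integer is the only one. Gap-free partitions must be handled separately: there maex is undefined (or taken to be distinct from mex), so they are excluded on both sides.

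I expect no real obstacle. The only care needed is the sign bookkeeping in the first logarithmic term, and fixing the convention for maex on gap-free partitions.
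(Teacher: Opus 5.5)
Your proposal is correct and follows the paper's proof essentially verbatim. The paper also differentiates $\frac{((w-1)q;q)_\infty}{(wq;q)_\infty}$ in $w$ (via the product rule, which is the same computation as your logarithmic derivative), sets $w=0$ to obtain $(-q;q)_\infty\sum_{n\ge1}\bigl(q^n-\frac{q^n}{1+q^n}\bigr)$, and reads off the combinatorial interpretation from $q^{2n}\prod_{k\neq n}(1+q^k)$; your explicit check of the mex--maex equivalence, including the exclusion of gap-free partitions, is a small addition the paper leaves implicit.
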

For instance, with $n=7$, partitions with one integer missing are $4+2+1, \ 3+3+1, \ 3+2+2, \ 3+1+1+1+1$, and the partitions of the second type as stated in the corollary are $5+1+1, \ 3+3+1, \ 3+2+2, \ 3+2+1+1$, giving us four of each kind.

Next, through the coefficients in \eqref{Main_GFN}, one might also connect the maximal excludant with the second minimal excludant ($second \ mex$, for short, which is the second smallest positive integer that is not a part of the partition) studied by Kaur, Rana and the second author \cite{KRE24}. Since $P(n,2)$ is the number of partitions of $n$ in which exactly two integers are missing, this is the same as the number of partitions of $n$ in which second mex and maex coincide. One might, in the spirit of the proof of Corollary \ref{P_1(n,1)GFN}, differentiate \eqref{Main_GFN} twice with respect to $w$ and then set $w=0$ to obtain the generating function of $P(n, 2)$. We leave the details to the reader.

We now define the following two functions: 
\begin{align*}
\mathcal{M}_e(n) &=\sum_{k=0}^{\lfloor \frac{n-1}{2} \rfloor}P(n, 2k),\\
\mathcal{M}_o(n) &=\sum_{k=1}^{\lfloor \frac{n}{2} \rfloor}P(n, 2k-1).
\end{align*}
The two functions $\mathcal{M}_e(n)$ and $\mathcal{M}_o(n)$ denote the number of partitions of $n$ with an even/odd number of missing integers respectively. The respective upper bounds in the two sums above attest to the fact that the maximum number of missing integers in a partition of $n$ is $n-1$, corresponding to the partition $n$. By letting $w=1$ in \eqref{Main_GFN}, we obtain the generating function of $p(n)$, that is $\frac{1}{(q;q)_{\infty}}$, as expected. We now replace $w$ with $-1$ instead, giving us an intriguing series, which we believe is not combinatorially well-known.
\begin{corollary}
We see that
\begin{align*} 
\sum_{n=0}^{\infty}\sum_{m=0}^{\infty} P(n,m)(-1)^mq^n=\sum_{n=0}^{\infty}\left(\mathcal{M}_e(n)-\mathcal{M}_o(n)\right)q^n=\frac{\big(-2q;q\big)_{\infty}}{(-q;q)_{\infty}}. 
\end{align*}
\end{corollary}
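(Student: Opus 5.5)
This follows by specializing Corollary \ref{P_1(n,m)GFN} at $w=-1$. Both sides of \eqref{Main_GFN} converge absolutely for $|q|<1$ and any fixed $w\in\mathbb{C}$. On the left, for fixed $n$ the inner sum over $m$ is finite, since $P(n,m)=0$ for $m\ge n$. So we may substitute $w=-1$ directly, coefficient by coefficient in $q$.

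On the left-hand side, the coefficient of $q^n$ becomes $\sum_{m}(-1)^m P(n,m)$. We split it by the parity of $m$. The even terms $m=2k$ range over $0\le 2k\le n-1$, that is, $0\le k\le \lfloor (n-1)/2\rfloor$, and they contribute $\mathcal{M}_e(n)$. The odd terms $m=2k-1$ range over $1\le 2k-1\le n-1$, that is, $1\le k\le\lfloor n/2\rfloor$, and they contribute $-\mathcal{M}_o(n)$. These ranges match the definitions because a partition of $n$ has at most $n-1$ missing integers.

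The case $n=0$ needs a separate check. Here $P(0,0)=1$, while the defining sum for $\mathcal{M}_e(0)$ is empty since $\lfloor -1/2\rfloor=-1$. So we use the convention $\mathcal{M}_e(0)=1$, $\mathcal{M}_o(0)=0$, which is consistent with the empty partition having zero missing integers. This gives the first equality.

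On the right-hand side of \eqref{Main_GFN}, putting $w=-1$ gives $((w-1)q;q)_\infty=(-2q;q)_\infty$ and $(wq;q)_\infty=(-q;q)_\infty$. The quotient is therefore $(-2q;q)_\infty/(-q;q)_\infty$, which is the second equality.

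There is no real obstacle here. The only points needing care are the bookkeeping of the summation ranges and the $n=0$ convention, both handled above.
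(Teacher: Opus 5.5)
Your proposal is correct and follows the paper's route: the paper obtains this corollary simply by substituting $w=-1$ into \eqref{Main_GFN}, and your explicit treatment of the $n=0$ term supplies a detail the paper leaves implicit (the defining sum for $\mathcal{M}_e(0)$ is empty, so the convention $\mathcal{M}_e(0)=1$ is needed). One small correction: \eqref{Main_GFN} does not converge for every fixed $w\in\mathbb{C}$, since the right side has poles at $w=q^{-k}$. This does not affect your proof, because you only need $|w|=1$, or equivalently the coefficient-by-coefficient substitution in $\mathbb{Z}[w]$ that you actually use.
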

The $q$-product on the right hand side above yields the generating function for the number of partitions of $n$ with an even number of missing integers minus those with an odd number of missing integers. This sequence can be found in OEIS [A268498] \cite{OEIS}.

Before proceeding onto some conjectures, we describe a congruence result modulo $3$.
 \begin{theorem}\label{P_1(n,m)(w=-1)GFN}
 For $n\in\mathbb{N},$ we have
\begin{align*}
\mathcal{M}_e(n)-\mathcal{M}_o(n)\equiv
    \begin{cases}
            0 \quad(\text{mod 3}), &         \text{if n is not a square},\\
            1\quad(\text{mod 3}), &         \text{if n is an odd square},\\
            2\quad(\text{mod 3}), &         \text{if n is an even square.}
    \end{cases}
\end{align*}
\end{theorem}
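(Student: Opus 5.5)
The plan is to work directly with the product from the preceding corollary,
\begin{align*}
\sum_{n=0}^{\infty}\left(\mathcal{M}_e(n)-\mathcal{M}_o(n)\right)q^n=\frac{(-2q;q)_{\infty}}{(-q;q)_{\infty}},
\end{align*}
and reduce it modulo $3$. Since $-2\equiv 1 \pmod 3$, we have $(1+2q^k)\equiv(1-q^k)\pmod 3$ coefficientwise, and therefore $(-2q;q)_\infty\equiv (q;q)_\infty \pmod 3$ as formal power series with integer coefficients. The right hand side is thus congruent to $(q;q)_\infty/(-q;q)_\infty$ modulo $3$. Because $1/(-q;q)_\infty$ has integer coefficients, multiplying a congruence by it is legitimate.

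The key step is to recognize $(q;q)_\infty/(-q;q)_\infty$ as a classical theta function. Using $(-q;q)_\infty=(q^2;q^2)_\infty/(q;q)_\infty$, we get
\begin{align*}
\frac{(q;q)_\infty}{(-q;q)_\infty}=\frac{(q;q)_\infty^2}{(q^2;q^2)_\infty}=\sum_{n=-\infty}^{\infty}(-1)^nq^{n^2}=1+2\sum_{n\ge1}(-1)^nq^{n^2}.
\end{align*}
This is Gauss's identity, a special case of the Jacobi triple product. Hence, for $n\ge1$, the coefficient of $q^n$ is $0$ if $n$ is not a square, and $2(-1)^{k}$ if $n=k^2$.

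Finally, we read off the residues. If $n=k^2$ with $k$ odd, the coefficient is $-2\equiv1\pmod 3$; since $k$ odd is equivalent to $n$ odd, this covers odd squares. If $k$ is even, the coefficient is $2$, which covers even squares. If $n$ is not a square, the coefficient is $0$. This is exactly the statement of the theorem.

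The only potentially delicate point is justifying the coefficientwise congruence of infinite products. It is routine, because for each fixed power of $q$ only finitely many factors matter. The recognition of Gauss's theta identity is the real content of the argument, and it is a standard citation.
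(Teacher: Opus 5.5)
Your proposal is correct and follows essentially the same route as the paper. Both arguments reduce $(-2q;q)_\infty\equiv(q;q)_\infty \pmod 3$ and identify the resulting product with $\sum_{n}(-1)^nq^{n^2}$ via the Jacobi triple product; the paper rewrites $1/(-q;q)_\infty$ by Euler's identity as $(q;q^2)_\infty$, while you use $(-q;q)_\infty=(q^2;q^2)_\infty/(q;q)_\infty$, which gives the same product $(q;q^2)_\infty^2(q^2;q^2)_\infty$.
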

 In their recent work, Kim, Kim and Lovejoy \cite{KKLparitybias} discovered a remarkable bias in the parity of partitions. They proved that the number of partitions of $n$ with more odd parts than even parts $P_o(n)$ is greater than the number of partitions of $n$ with more even parts than odd parts $P_e(n)$, the only exception being $n=2$.

In the same spirit, we propose the following conjecture regarding a bias in the parity of number of missing integers.
\begin{conjecture} For $n>34$, we have
\begin{align*}
\mathcal{M}_e(n)>\mathcal{M}_o(n).
\end{align*}
\end{conjecture}
In other words, the number of partitions of $n$ with an even number of missing integers is always greater than the number of partitions of $n$ with an odd number of missing integers for $n>34$. Furthermore, we observe another conjecture with a reversed inequality for $n > 10$.
\begin{conjecture} Let $n>10$. Then 
\begin{align*}
\mathcal{M}_e(n)-\mathcal{M}_o(n)<q(n),
\end{align*} 
where $q(n)$ is the number of partitions of $n$ into distinct parts.
\end{conjecture}

\subsection{Preliminaries}
One of the crucial tools in $q$-series and partitions is the $q$-binomial theorem. For $|q|<1$ and $ a,z \in \mathbb{C}$, one has
\begin{align}
\sum_{n=0}^{\infty} \frac{(a;q)_n}{(q;q)_n} z^n = \frac{(az;q)_{\infty}}{(z;q)_{\infty}}\label{q-binomial}.
\end{align}
Also, recall the very useful Jacobi triple product identity:
\begin{align*}
\sum_{n=-\infty}^{\infty} z^nq^{n^2} =(-q/z;q)_{\infty}(-zq;q)_{\infty}(q^2;q^2)_{\infty}. 
\end{align*}
At $z=-1$, this becomes
\begin{align}
\sum_{n=-\infty}^{\infty} (-1)^nq^{n^2} = 1+2\sum_{n=1}^{\infty} (-1)^nq^{n^2} =(q;q)^2_{\infty}(q^2;q^2)_{\infty}\label{JTPI}.
\end{align}
We also call attention to the famous partition identity due to Euler, namely,
\begin{align}
(-q;q)_{\infty}=\frac{1}{(q;q^2)_{\infty}}.\label{euler}
\end{align}
\subsection{Proofs of the main results}
\begin{proof}[Theorem \textup{\ref{P_j(n,m)GFN}}][]
We begin by writing the product: 
\begin{align*}
\frac{1}{(q;q)_{j-1}}(w+q^j+q^{2j}+\cdots)\times \cdots \times (w+q^{(\ell-1)}+q^{2(\ell-1)}+\cdots)\times(q^{\ell}+q^{2\ell}+\cdots).
\end{align*}
Observe that $w$ replaces `1' in the standard expressions for the generating functions of partitions and thus keeps track of all parts which occur zero times, i.e., those that are missing. More precisely, the exponent of $w$ counts the number of missing integers $\geq j$ and less than the largest part $\ell$ of the partition. A closed form of the above product may be written as follows:
\begin{align*}
&\frac{1}{(q;q)_{j-1}} \times \left(w + \frac{q^j}{1 - q^j} \right) \times \cdots \times \left(w + \frac{q^{\ell - 1}}{1 - q^{\ell - 1}}\right) \times \frac{q^{\ell}}{1 - q^{\ell}} \\
& =\frac{q^{\ell}w^{\ell-j}}{(q;q)_{\ell}}\left(\frac{(w-1)q^j}{w};q\right)_{\ell-j}.
\end{align*}.
Now, multiplying and dividing by $(\frac{w-1}{w};q)_j$ and then summing over $\ell$ from $j$ to $\infty$, we arrive at 
\begin{align*}
\frac{w^{-j}}{(\frac{w-1}{w};q)_j}\sum_{\ell=j}^{\infty}\frac{(\frac{w-1}{w};q)_{\ell}}{(q;q)_{\ell}}(wq)^{\ell}.
\end{align*}
The second sum here is a tail of the sum appearing in the $q$-binomial theorem \eqref{q-binomial}, where $a=\frac{w-1}{w}$ and $z=wq$, so upon applying the $q$-binomial theorem, we obtain
\begin{align*}
\frac{w^{-j}}{(\frac{w-1}{w};q)_j}\left(\frac{((w-1)q;q)_{\infty}}{(wq;q)_{\infty}}-\sum_{\ell=0}^{j-1}\frac{(\frac{w-1}{w};q)_{\ell}}{(q;q)_\ell}(wq)^\ell\right),
\end{align*}
and the proof is complete.
\end{proof}

\begin{proof}[Corollary \textup{\ref{P_1(n,m)GFN}}][]
Put $j=1$ in \eqref{P_j(n,m)GFN} to obtain
\begin{align*}
\sum_{n=1}^{\infty}\sum_{m=0}^{\infty} P(n,m)w^mq^n &=\frac{w^{-1}}{(\frac{w-1}{w};q)_1}\left(\frac{((w-1)q;q)_{\infty}}{(wq;q)_{\infty}}-\sum_{\ell=0}^{0}\frac{(\frac{w-1}{w};q)_{\ell}}{(q;q)_\ell}(wq)^\ell\right) \\
&= \frac{w^{-1}}{1 - \frac{w-1}{w}} \left(\frac{((w-1)q;q)_{\infty}}{(wq;q)_{\infty}} - 1 \right)  \\
&= \frac{((w-1)q;q)_{\infty}}{(wq;q)_{\infty}} - 1.
\end{align*}
Adding $1 = P(0, 0)$ to both sides, we get the desired result.
\end{proof}

\begin{proof}[Corollary \textup{\ref{P_1(n,1)GFN}}][]
Differentiating the left hand side of \eqref{Main_GFN} with respect to $w$ and setting $w=0$ clearly gives $\sum_{n=0}^{\infty} P(n, 1) q^n$. Coming to the right hand side, we have
\begin{align*}
\frac{d}{dw} \frac{((w-1)q; q)_{\infty}}{(wq; q)_{\infty}} &= \frac{1}{(wq; q)_{\infty}}\frac{d}{dw}((w-1)q; q)_{\infty} + ((w-1)q; q)_{\infty}\frac{d}{dw} \frac{1}{(wq; q)_{\infty}} \\
&= \frac{1}{(wq; q)_{\infty}} \times ((w-1)q; q)_{\infty} \sum_{n=1}^{\infty}\frac{-q^n}{1 - (w-1) q^n} \\
&+ ((w-1)q; q)_{\infty} \times \frac{-1}{(wq; q)_{\infty}^2} \times (wq; q)_{\infty} \times \sum_{n=1}^{\infty} \frac{-q^n}{1 - wq^n} \\
&= -\frac{((w-1)q; q)_{\infty}}{(wq; q)_{\infty}}\sum_{n=1}^{\infty}\frac{q^n}{1 - (w-1) q^n} + \frac{((w-1)q; q)_{\infty}}{(wq; q)_{\infty}} \sum_{n=1}^{\infty} \frac{q^n}{1 - wq^n},
\end{align*}
which on setting $w=0$ becomes
\begin{align*}
-(-q; q)_{\infty}\sum_{n=1}^{\infty} \frac{q^n}{1 + q^n} + (-q; q)_{\infty}\sum_{n=1}^{\infty}q^n &=(-q; q)_{\infty}\left(\sum_{n=1}^{\infty} -\frac{q^n}{1 + q^n} + q^n \right) \\
&  = (-q; q)_{\infty}\sum_{n=1}^{\infty}\frac{q^{2n}}{1 + q^n}.
\end{align*}
As far as the combinatorial interpretation stated in the corollary is concerned, note that
\begin{equation*}
(-q; q)_{\infty}\sum_{n=1}^{\infty}\frac{q^{2n}}{1 + q^n} = \sum_{n=1}^{\infty} q^{2n} \prod_{\substack{k=1 \\ k \neq n}}^{\infty} (1+q^k).
\end{equation*}
The summand on the right hand side above, $q^{2n} \prod_{\substack{k=1 \\ k \neq n}}^{\infty} (1+q^k)$, generates the partitions where $n$ occurs exactly twice and all other parts are distinct. Hence, summing over all positive integers $n$ generates all the partitions where exactly one integer appears exactly twice and all other parts are distinct. This finishes the proof.
\end{proof}

\begin{proof}[Theorem \textup{\ref{P_1(n,m)(w=-1)GFN}}][]
We shall start with writing the generating function as
\begin{align}
\sum_{n=0}^{\infty}\left(\mathcal{M}_e(n)-\mathcal{M}_o(n)\right)q^n&=\frac{\big(-2q;q\big)_{\infty}}{(-q;q)_{\infty}}\nonumber\\&=\big(-2q;q\big)_{\infty}(q;q^2)_{\infty} \quad \text{(by \eqref{euler})}\nonumber\\
&\equiv \big(q;q\big)_{\infty}(q;q^2)_{\infty} \quad\text{(mod 3)}\nonumber\\
&\equiv \big(q;q^2\big)^2_{\infty}(q^2;q^2)_{\infty} \quad\text{(mod 3)}.\label{J-product form mod 3}
\end{align}
Now, invoking the Jacobi triple product identity \eqref{JTPI} we have
\begin{align*}
\big(q;q^2\big)^2_{\infty}(q^2;q^2)_{\infty}=\sum_{n=-\infty}^{\infty}(-1)^n q^{n^2}=1+2\sum_{n=1}^{\infty}(-1)^n q^{n^2}.
\end{align*}
Therefore, \eqref{J-product form mod 3} reshapes as
\begin{align*}
\sum_{n=0}^{\infty}\left(\mathcal{M}_e(n)-\mathcal{M}_o(n)\right)q^n&\equiv \big(q;q^2\big)^2_{\infty}(q^2;q^2)_{\infty} \quad\text{(mod 3)}\\ &= 1+2\sum_{n=1}^{\infty}(-1)^n q^{n^2}\quad\text{(mod 3)}\\ & \equiv 1+\sum_{n=1}^{\infty}(-1)^{n-1} q^{n^2}\quad\text{(mod 3)}.
\end{align*}
Now comparing the coefficients of $q^n$, for $n\geq 1$, on both sides in the above expression, we find that only the square powers survive modulo 3. Furthermore, at these powers, the values $\pm$ \emph{1(mod 3)} are taken according as $n$ is odd or even respectively. This finishes the proof.
\end{proof}

\subsection{Number of distinct positive integers less than the largest part, which have multiplicity less than $k$, in all unrestricted partitions of $n$}
Merca and Yee \cite{MercaYee} studied the sum of parts with multiplicity at least $2$ in all the partitions of $n$. We look at a similar theme and consider the parts with multiplicity less than $k$. To begin with, we consider a new partition statistic, namely, the largest part minus the number of parts with multiplicity at least $k$ (the latter denoted by $\nu_{d,\geq k}(\pi)$ for a partition $\pi$). Note that this statistic can also be interpreted as the number of distinct positive integers less than or equal to the largest part, which appear less than $k$ times. This is because every integer from 1 to the largest part either appears with multiplicity at least $k$ or multiplicity less than $k$ (this includes the missing integers as well). We denote the number of distinct positive integers in the partition $\pi$ that are less than or equal to the largest part, which appear less than $k$ times, by $\nu_{d,<k}(\pi)$. We also define
\begin{align*}
\nu_{D,<k}(n)&:= \sum_{\pi \in \mathcal{P}(n)}\nu_{d,<k}(\pi)= \sum_{\pi \in \mathcal{P}(n)}(\ell(\pi) - \nu_{d,\geq k}(\pi)) \\
& = \sum_{\pi \in \mathcal{P}(n)} \ell(\pi) - \sum_{\pi \in \mathcal{P}(n)} \nu_{d,\geq k}(\pi) =   \sigma L(n)-\nu_{D,\geq k}(n),
\end{align*}
where
\begin{align*}
\sum_{\pi \in \mathcal{P}(n)}\nu_{d,\geq k}(\pi)=\nu_{D,\geq k}(n),  \quad \text{and} \quad \sum_{\pi \in \mathcal{P}(n)}\ell(\pi)=\sigma L(n).
\end{align*}
Note that, by conjugation, $\sigma L(n) = t(n)$, the total number of parts in all the partitions of $n$. We then have the following result:
\begin{theorem} \label{Multiplicity<k}
For a positive integer $k$, we have
\begin{align}
\sum_{n=0}^{\infty}\nu_{D,<k}(n)q^n=\frac{1}{(q;q)_{\infty}}\sum_{\substack{n=1\\ n \neq k}}^{\infty}\frac{q^n}{1-q^n}.
\label{QGFNmulti<k}.
\end{align}
Hence, the number of distinct integers less than or equal to the largest part appearing less than $k$ times in all the partitions of $n$ equals the number of parts different from $k$ in all the partitions of $n$.
\end{theorem}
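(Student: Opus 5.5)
The plan is to use the decomposition already recorded above the statement, $\nu_{D,<k}(n)=\sigma L(n)-\nu_{D,\geq k}(n)$. I will find generating functions for the two pieces separately and then subtract them.

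\emph{Step 1 (largest parts).} By conjugation $\sigma L(n)=t(n)$, the total number of parts in all partitions of $n$. The standard generating function for $t(n)$ comes from marking each part: differentiate $\prod_{m\ge1}(1-zq^m)^{-1}$ with respect to $z$ and set $z=1$. This gives
\begin{align*}
\sum_{n\ge0}\sigma L(n)q^n=\frac{1}{(q;q)_\infty}\sum_{m=1}^{\infty}\frac{q^m}{1-q^m}.
\end{align*}
Combinatorially, $q^{mr}/(q;q)_\infty$ counts partitions together with a chosen $r$-th copy of the part $m$, so summing over $r\ge1$ counts every part once.

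\emph{Step 2 (parts with multiplicity at least $k$).} For each $m\ge1$, I count the partitions of $n$ in which $m$ occurs at least $k$ times. The generating function is obtained by replacing the factor $(1-q^m)^{-1}$ with $q^{km}(1-q^m)^{-1}$, which gives $q^{km}/(q;q)_\infty$. Summing over $m$ then yields
\begin{align*}
\sum_{n\ge0}\nu_{D,\geq k}(n)q^n=\frac{1}{(q;q)_\infty}\sum_{m=1}^{\infty}q^{km}=\frac{1}{(q;q)_\infty}\,\frac{q^k}{1-q^k}.
\end{align*}

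\emph{Step 3 (subtract).} Subtracting Step 2 from Step 1 removes exactly the $n=k$ term of the Lambert series, which gives \eqref{QGFNmulti<k}. For the combinatorial statement, note that $\frac{1}{(q;q)_\infty}\frac{q^n}{1-q^n}$ generates the total number of occurrences of the part $n$ over all partitions, by the same copy-marking argument as in Step 1. Summing over $n\neq k$ therefore counts all parts different from $k$.

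I expect no real obstacle. The only care needed is in Step 2: I must check that $\nu_{d,\ge k}(\pi)$ counts each distinct value with multiplicity at least $k$ exactly once. This is what makes the series $\sum_m q^{km}$ correct, since each $m$ contributes $1$ when $m$ appears at least $k$ times. I also need to confirm that the degenerate case $n=0$ contributes zero on both sides.
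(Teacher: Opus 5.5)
Your proof is correct, and it takes a genuinely different and much shorter route than the paper's. The paper does not split $\nu_{D,<k}(n)=\sigma L(n)-\nu_{D,\geq k}(n)$ at the level of generating functions. It instead writes down a trivariate generating function $L(w,z;q)$, with $z$ marking distinct parts of multiplicity at least $k$ and $w$ marking the largest part. It then specializes $z=w^{-1}$ to get the bivariate generating function for the full distribution of $\nu_{d,<k}$, and differentiates at $w=1$. That computation needs the derivative of the finite product $\left(\frac{w-1}{w}q^k;q^k\right)_{n-1}$, repeated applications of the $q$-binomial theorem followed by a cancellation, and finally the evaluation $\sum_{n\geq1} nq^n/(q;q)_n=\frac{1}{(q;q)_\infty}\sum_{n\geq1}\frac{q^n}{1-q^n}$. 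You work only with the first moment and use linearity. You write $\nu_{D,\geq k}(n)$ as a sum over $m$ of the number of partitions of $n$ in which $m$ occurs at least $k$ times, each term having generating function $q^{km}/(q;q)_\infty$, and you handle $\sigma L(n)$ by conjugation. The paper's method gives more information: it yields the whole distribution of $\nu_{d,<k}$, so higher moments are accessible. For the stated theorem, however, yours is cleaner and more illuminating. It shows the identity is just conjugation combined with the fact that $\nu_{D,\geq k}(n)$ equals the total number of $k$'s in all partitions of $n$ (Elder's theorem, with Stanley's theorem as the case $k=1$). In fact all the quantities involved are read off from one symmetric double sum $\sum_{m,r\geq1}q^{mr}/(q;q)_\infty$, which counts a partition together with a value $m$ occurring at least $r$ times. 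The full sum gives $t(n)$, the row $r=k$ gives $\nu_{D,\geq k}(n)$, and the column $m=k$ gives the number of $k$'s. The swap $(m,r)\mapsto(r,m)$ on triples $(\pi',m,r)$, with $\pi'$ a partition of $n-mr$, matches the row and the column bijectively. So your decomposition reduces the combinatorial proof requested in the paper's Remark to conjugation plus this swap, though only in subtractive form.
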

Setting $k=1$ in the above result gives us
\begin{corollary}
The number of missing integers in all the partitions of $n$ equals the number of parts different from $1$ in all the partitions of $n$.
\end{corollary}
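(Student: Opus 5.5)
The corollary is the special case $k=1$ of Theorem \ref{Multiplicity<k}, so the plan is to specialise that theorem and then read off both sides combinatorially.

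First I check that $\nu_{d,<1}(\pi)$ is the number of missing integers of $\pi$. By definition it counts the distinct positive integers $i$ with $1\le i\le \ell(\pi)$ that appear fewer than once in $\pi$, i.e.\ zero times. The largest part $\ell(\pi)$ itself always appears, so it is never counted. The integers counted are therefore exactly those $i<\ell(\pi)$ that do not occur as parts, which are the missing integers of $\pi$ in the sense of the paper. Consequently $\nu_{D,<1}(n)$ is the total number of missing integers over all partitions of $n$.

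Next I interpret the right-hand side of \eqref{QGFNmulti<k} at $k=1$, namely $\frac{1}{(q;q)_\infty}\sum_{m\ge 2}\frac{q^m}{1-q^m}$. I would use the standard marking argument, already implicit in the theorem's concluding sentence. Write
\begin{align*}
\frac{1}{(q;q)_\infty}\cdot\frac{q^m}{1-q^m}=\frac{1}{(q;q)_\infty}\sum_{r\ge1}q^{rm}.
\end{align*}
This is the generating function for pairs $(\pi,r)$ in which $\pi$ contains the part $m$ at least $r$ times: remove $r$ copies of $m$ and what remains is an arbitrary partition. The coefficient of $q^n$ is therefore the total number of occurrences of the part $m$, counted with multiplicity, over all partitions of $n$. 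Summing over $m\ge2$ gives the total number of parts different from $1$ in all partitions of $n$.

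Equating coefficients of $q^n$ on the two sides proves the corollary. No step here is a real obstacle. The only point needing care is the endpoint check in the first step: the upper bound "$\le$ the largest part" in the definition of $\nu_{d,<k}$ must not add a spurious count when $k=1$, and it does not, because the largest part is always present.
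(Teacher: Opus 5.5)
Your proposal is correct and is essentially the paper's argument: the corollary is Theorem \ref{Multiplicity<k} at $k=1$, where $\nu_{d,<1}(\pi)$ counts exactly the missing integers because the largest part always occurs. Your marking argument for $\frac{1}{(q;q)_\infty}\frac{q^m}{1-q^m}$ is only a cosmetic variant of the paper's expansion $\frac{q^k}{(1-q^k)^2}\prod_{n\neq k}(1-q^n)^{-1}=\sum_{r\ge1} r q^{rk}\prod_{n\neq k}(1-q^n)^{-1}$.
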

As an example, consider $n=4, k=2$. We list the partitions of $4$ along with the integers less than or equal to the largest part which appear less than two times in each of them in the parentheses. We have $4 \ (1, 2, 3, 4), \ 3+1 \ (1,2,3), \ 2+2 \ (1), \ 2+1+1 \ (2), \ 1+1+1+1$ (none). So, in total there are $4+3+1+1+0 = 9$ such integers over all the partitions of 4. Now, the total number of parts different from $2$ are also $9$, because the total number of parts is $12$ and the number of $2$s is $3$. 
\begin{proof}[Theorem \textup{\ref{Multiplicity<k}}][]
We start by writing a tri-variate generating function in $z,w$ and $q$ where the exponent of $z$ keeps track of the number of parts with multiplicity at least $k$ in a partition and that of $w$ keeps track of the largest part. Therefore,
\begin{align}
L(w,z;q)&=\sum_{n=1}^{\infty}\left(\frac{1-q^k}{1-q}+\frac{zq^k}{1-q}\right) \times \left(\frac{1-q^{2k}}{1-q^2}+\frac{zq^{2k}}{1-q^2}\right) \times\cdots\nonumber\\
&\times\left(\frac{1-q^{(n-1)k}}{1-q^{n-1}}+\frac{zq^{(n-1)k}}{1-q^{(n-1)}}\right) \times\left(\frac{q^n-q^{nk}}{1-q^{n}}+\frac{zq^{nk}}{1-q^n}\right)w^n\nonumber\\
&=\sum_{n=1}^{\infty}\frac{\left((1-z)q^k;q^k\right)_{n-1}w^n}{(q;q)_n}\left(q^n-(1-z)q^{nk}\right)\label{L function}.
\end{align}
Now, put $z=w^{-1}$ in \eqref{L function} to get
\begin{align*}
L(w,w^{-1};q)&=\sum_{n=1}^{\infty}\frac{\left(\frac{w-1}{w}q^k;q^k\right)_{n-1}w^n}{(q;q)_n}\left(q^n-\frac{w-1}{w}q^{nk}\right)\label{L(w,w^-1)}.
\end{align*}
Here the exponent of $w$ is basically the largest part minus the number of distinct parts whose multiplicity is at least $k$. Therefore, if $f(m, n)$ denotes the number of partitions $\pi$ of $n$ with $\ell(\pi) - \nu_{d,\geq k}(\pi) = \nu_{d, < k}(\pi) = m$, then we have
\begin{equation}\label{diff}
\sum_{n=0}^{\infty} \sum_{m=0}^{\infty} f(m, n) w^m q^n = \sum_{n=1}^{\infty}\frac{\left(\frac{w-1}{w}q^k;q^k\right)_{n-1}w^n}{(q;q)_n}\left(q^n-\frac{w-1}{w}q^{nk}\right).
\end{equation}
Differentiating the left hand side of \eqref{diff} with respect to $w$ and putting $w=1$ gives $\sum_{n=0}^{\infty} \left(\sum_{m=0}^{\infty} m f(m, n) \right)q^n$. But observe that $\sum_{m=0}^{\infty} m f(m, n) = \sum_{\pi \in \mathcal{P}(n)}\nu_{d,<k}(\pi) =  \nu_{D,<k}(n)$. Thus, the left hand side becomes $\sum_{n=0}^{\infty}\nu_{D,<k}(n)q^n$. We now apply the same sequence of operations on the right hand side of \eqref{diff}, starting with differentiation with respect to $w$. We have
\begin{align}
& \frac{d}{dw}  \sum_{n=1}^{\infty}\frac{(\frac{w-1}{w}q^k;q^k)_{n-1}}{(q;q)_n}\left(w^n q^n- (w^n - w^{n-1})q^{nk}\right) \nonumber \\
&=   \sum_{n=1}^{\infty} \frac{(\frac{w-1}{w}q^k;q^k)_{n-1}}{(q; q)_n} \frac{d}{dw} (w^n q^n- (w^n - w^{n-1})q^{nk}) \nonumber\\
&+ \frac{(w^n q^n- (w^n - w^{n-1})q^{nk})}{(q; q)_n} \frac{d}{dw} \left(\frac{w-1}{w}q^k;q^k \right)_{n-1}. \label{diffsplit}
\end{align}
We now differentiate the two terms separately, set $w=1$, and then put everything back together. The first term yields
\begin{equation*}
 \sum_{n=1}^{\infty} \frac{(\frac{w-1}{w}q^k;q^k)_{n-1}}{(q; q)_n} (n w^{n-1} q^n- (n w^{n-1} - (n-1)w^{n-2})q^{nk}),
\end{equation*} 
which at $w=1$ transforms into
\begin{equation}\label{first term}
 \sum_{n=1}^{\infty}\frac{nq^n - q^{nk}}{(q; q)_n} =  \sum_{n=1}^{\infty} \frac{nq^n}{(q; q)_n} -  \sum_{n=1}^{\infty} \frac{(q^k)^n}{(q; q)_n} =  \sum_{n=1}^{\infty} \frac{nq^n}{(q; q)_n} - \left( \frac{1}{(q^k; q)_{\infty}} - 1\right),
\end{equation}
where we invoked \eqref{q-binomial} for the second sum in the last step above. Turning our attention to the second term in \eqref{diffsplit}, we have
\begin{align*}
 \sum_{n=1}^{\infty} \frac{(w^n q^n- (w^n - w^{n-1})q^{nk})}{(q; q)_n} \left(\frac{w-1}{w}q^k;q^k \right)_{n-1} \sum_{r=1}^{n-1}\frac{-q^{rk}/w^2}{1 - \frac{w-1}{w}q^{rk}},
\end{align*}
which for $w=1$ gives
\begin{align}
-  \sum_{n=1}^{\infty}  \frac{q^n}{(q; q)_n} \sum_{r=1}^{n-1}q^{rk} &= - \sum_{n=1}^{\infty} \frac{q^n}{(q; q)_n} \times \frac{q^k - q^{nk}}{1 - q^k} \nonumber\\
& = -\frac{q^k}{1 - q^k}\sum_{n=1}^{\infty} \frac{q^n}{(q; q)_n} + \frac{1}{1 - q^k} \sum_{n=1}^{\infty} \frac{(q^{k+1})^n}{(q; q)_n} \nonumber\\
&= -\frac{q^k}{1-q^k}\left(\frac{1}{(q;q)_{\infty}}-1\right) + \frac{1}{1-q^k}\left(\frac{1}{(q^{k+1};q)_{\infty}}-1\right), \label{second term}
\end{align}
where we again made use of \eqref{q-binomial} for both the sums in the penultimate step.
Putting \eqref{first term} and \eqref{second term} together and recalling that their sum equals $\sum_{n=0}^{\infty}\nu_{D,<k}(n)q^n$, we finally obtain
\begin{align}
    \sum_{n=0}^{\infty}\nu_{D,<k}(n)q^n &=\sum_{n=1}^{\infty}\frac{nq^n}{(q;q)_n}-\frac{q^k}{1-q^k}\left(\frac{1}{(q;q)_{\infty}}-1\right)+\frac{1}{1-q^k}\left(\frac{1}{(q^{k+1};q)_{\infty}}-1\right) \nonumber\\
&-\left(\frac{1}{(q^{k};q)_{\infty}}-1\right) \nonumber \\
&=  \sum_{n=1}^{\infty}\frac{nq^n}{(q;q)_n}-\frac{q^k}{1-q^k}\frac{1}{(q;q)_{\infty}}. \label{prefinal exp}
\end{align}
Now ,
\begin{align}
\sum_{n=1}^{\infty}\frac{nq^n}{(q;q)_n}=\frac{d}{dz} \left(\sum_{n=1}^{\infty}\frac{z^n q^n}{(q;q)_{n}}\right) \Bigg|_{z=1}& = \frac{d}{dz}\left(\frac{1}{(zq;q)_{\infty}} - 1\right)\Bigg|_{z=1}\nonumber \\
& = \frac{1}{(zq;q)_{\infty}} \sum_{n=1}^{\infty}\frac{q^n}{1 - zq^n}\Bigg|_{z=1} \nonumber \\
&= \frac{1}{(q;q)_{\infty}} \sum_{n=1}^{\infty}\frac{q^n}{1 - q^n} \label{std},
\end{align}
and hence, from \eqref{prefinal exp} and \eqref{std}, we finally obtain
\begin{align*}
   \sum_{n=0}^{\infty}\nu_{D,<k}(n)q^n =  \frac{1}{(q;q)_{\infty}}\left( \sum_{n=1}^{\infty} \frac{q^n}{1 - q^n} - \frac{q^k}{1 - q^k}\right) = \frac{1}{(q;q)_{\infty}}\sum_{\substack{n=1\\ n \neq k}}^{\infty}\frac{q^n}{1-q^n}.
\end{align*}
This completes the proof of the identity \eqref{QGFNmulti<k}. To prove the combinatorial interpretation stated in the theorem, we will look at the expression
\begin{align*}
 \frac{1}{(q;q)_{\infty}}\frac{q^k}{1 - q^k} &= \frac{q^k}{(1 - q^k)^2}\prod_{\substack{n=1\\ n \neq k}}^{\infty} \frac{1}{1 - q^n} = \sum_{m=1}^{\infty} mq^{mk}\prod_{\substack{n=1\\ n \neq k}}^{\infty} \frac{1}{1 - q^n} = \sum_{n=0}^{\infty}\left(\sum_{\pi \in \mathcal{P}(n)} \nu_{\pi}(k)\right) q^n,
\end{align*}
where $\nu_{\pi}(k)$ denotes the multiplicity of $k$ in the partition $\pi$. Thus, $ \frac{1}{(q;q)_{\infty}}\frac{q^k}{1 - q^k}$ generates the number of $k$s in all partitions of $n$, and consequently, $\frac{1}{(q;q)_{\infty}}\sum_{n=1}^{\infty} \frac{q^n}{1 - q^n}$ generates the number of parts in all partitions of $n$. In conclusion, their difference would generate the number of parts different from $k$ in all partitions of $n$.
\end{proof}
\begin{remark}
A combinatorial proof of the identity \eqref{QGFNmulti<k} is highly desirable.
\end{remark}

\section{Missing Integers in Overpartitions}
From the time Corteel and Lovejoy \cite{overp} introduced the concept of overpartitions, they have been a focal point of study in partition theory. An overpartition of $n$ is a partition of $n$ in which the first occurrence of any integer may be overlined. For example, the overpartitions of $n=3$ are: $3, \ \overline{3}, \ 2+1, \ \overline{2}+1, \ 2+\overline{1}, \ \overline{2}+\overline{1}, \ \overline{1}+1+1, \ 1+1+1$. In most situations, analogous results for overpartitions as those for partitions are highly sought after. We proceed to examine the number of missing integers in overpartitions now. For instance, the number of missing integers in the above list of overpartitions of $n=3$ are respectively, $2, 2, 0, 0, 0, 0, 0, 0$.

\subsection{Main Results}
\begin{theorem} \label{OP_j(n,m)GFN}
Let $\overline{P}_j(n,m)$ be the number of overpartitions of $n$ in which exactly $m$ positive integers less than the largest part, but greater than or equal to $j$, are missing. For $j\in \mathbb{N}$ and $w\in \mathbb{C}$ we have
 \begin{align}
\sum_{n=1}^{\infty}\sum_{m=0}^{\infty} \overline{P}_j(n,m)w^mq^n=\frac{2w^{-j}(-q;q)_{j-1}}{(\frac{w-2}{w};q)_j}\left(\frac{(w-2)q;q)_{\infty}}{(wq;q)_{\infty}}-\sum_{\ell=0}^{j-1}\frac{(\frac{w-2}{w};q)_{\ell}}{(q;q)_\ell}(wq)^\ell\right).\label{Main_GFN2}
\end{align}   
\end{theorem}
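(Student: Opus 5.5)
I will follow the same route as in the proof of Theorem \ref{P_j(n,m)GFN}, adjusting the local factors for overlined parts. First I fix the largest part $\ell \geq j$ and write the generating function for overpartitions whose largest part is exactly $\ell$, with $w$ marking each integer in $[j,\ell-1]$ that does not occur. For each $i<j$ the factor is the usual overpartition factor $\frac{1+q^i}{1-q^i}$. For each $j\le i\le \ell-1$, the value $i$ may be absent, which contributes $w$, or present, which contributes $\frac{1+q^i}{1-q^i}-1=\frac{2q^i}{1-q^i}$. For the largest part the factor is $\frac{2q^\ell}{1-q^\ell}$. The product for fixed $\ell$ is therefore
\begin{align*}
\frac{(-q;q)_{j-1}}{(q;q)_{j-1}}\prod_{i=j}^{\ell-1}\left(w+\frac{2q^i}{1-q^i}\right)\frac{2q^\ell}{1-q^\ell}.
\end{align*}

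Next I simplify the middle factors. Since $w+\frac{2q^i}{1-q^i}=\frac{w-(w-2)q^i}{1-q^i}=\frac{w\left(1-\frac{w-2}{w}q^i\right)}{1-q^i}$, the product for fixed $\ell$ becomes
\begin{align*}
2(-q;q)_{j-1}\,\frac{q^\ell w^{\ell-j}}{(q;q)_\ell}\left(\tfrac{w-2}{w}q^j;q\right)_{\ell-j}.
\end{align*}
I then multiply and divide by $(\frac{w-2}{w};q)_j$, which gives
\begin{align*}
\frac{2w^{-j}(-q;q)_{j-1}}{(\frac{w-2}{w};q)_j}\cdot\frac{(\frac{w-2}{w};q)_\ell}{(q;q)_\ell}(wq)^\ell.
\end{align*}

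Summing over $\ell\ge j$ produces a tail of the $q$-binomial series \eqref{q-binomial} with $a=\frac{w-2}{w}$ and $z=wq$. The full series equals $\frac{((w-2)q;q)_\infty}{(wq;q)_\infty}$, so subtracting the terms $\ell=0,\dots,j-1$ gives \eqref{Main_GFN2}.

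The only delicate step is the bookkeeping of factors. Each present part contributes the factor $2$ coming from the overline choice, and the values below $j$ contribute the factor $(-q;q)_{j-1}/(q;q)_{j-1}$; both must be tracked correctly so that the prefactor $2(-q;q)_{j-1}$ comes out exactly right. The manipulation that multiplies by $(\frac{w-2}{w};q)_j$ is formal in $w$, exactly as in the proof of Theorem \ref{P_j(n,m)GFN}.
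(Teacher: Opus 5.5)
Your fixed-$\ell$ computation is correct and is, step for step, the paper's own argument. You use the same local factors: $\frac{1+q^i}{1-q^i}$ for $i<j$, $w+\frac{2q^i}{1-q^i}=\frac{w(1-\frac{w-2}{w}q^i)}{1-q^i}$ for $j\le i\le \ell-1$, and $\frac{2q^\ell}{1-q^\ell}$ for the largest part. You then take the same $q$-binomial tail.

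The genuine gap is the range of $\ell$. You sum only over $\ell\ge j$, but the left-hand side counts \emph{every} overpartition of $n$. For $j\ge 2$, a nonempty overpartition whose largest part is less than $j$ has no integers in $[j,\ell-1]$, so it is counted in $\overline{P}_j(n,0)$. Your product never generates it, and the omission is not harmless. The prefactor $\frac{2w^{-j}(-q;q)_{j-1}}{(\frac{w-2}{w};q)_j}=\frac{w^{1-j}(-q;q)_{j-1}}{(\frac{w-2}{w}q;q)_{j-1}}$ is a power series in $q$. The bracket equals $\sum_{\ell\ge j}\frac{(\frac{w-2}{w};q)_\ell}{(q;q)_\ell}(wq)^\ell=O(q^j)$. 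Hence the right-hand side of \eqref{Main_GFN2} has zero coefficient of $q^n$ for $1\le n\le j-1$, while the left-hand side has coefficient $\overline{p}(n)$ there. Concretely, for $j=2$ the overpartitions $1$ and $\overline{1}$ give $\overline{P}_2(1,0)=2$ against $0$ on the right. At $n=2$ the left side gives $4$ while the right side gives only $2$, since $1+1$ and $\overline{1}+1$ are lost.

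So your argument, like the paper's (which makes exactly the same restriction to $\ell\ge j$), proves the identity only for overpartitions whose largest part is at least $j$. To obtain the statement for $\overline{P}_j(n,m)$ as defined, you have two options. You can add to the right-hand side the generating function $\frac{(-q;q)_{j-1}}{(q;q)_{j-1}}-1$ of nonempty overpartitions with all parts less than $j$; these all carry $w^0$. Alternatively, you can restrict the definition of $\overline{P}_j(n,m)$ to overpartitions with largest part at least $j$. The case $j=1$, which is all that Corollary~\ref{Corollary 3.2} uses, is unaffected, because no nonempty overpartition has largest part below $1$. The step you singled out as delicate, the bookkeeping of the factors $2$ and $(-q;q)_{j-1}$, is fine. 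The real boundary issue is which largest parts are summed over.
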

A particular instance of this identity, for $j=1$, results in a $q$-product on the right hand side. Here we put $\overline{P}_1(n,m)=\overline{P}(n,m)$ and also let $\overline{P}(0,0)=1$. Then \eqref{Main_GFN2} gives us
\begin{corollary}\label{Corollary 3.2}
Let $\overline{P}(n,m)$ be the number of overpartitions of $n$ in which exactly $m$ positive integers less than the largest part are missing. Then
\begin{align}
\sum_{n=0}^{\infty}\sum_{m=0}^{\infty} \overline{P}(n,m)w^mq^n=\frac{\big((w-2)q;q\big)_{\infty}}{(wq;q)_{\infty}}.\label{W_GFN_OP}
\end{align}
\end{corollary}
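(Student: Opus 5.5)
This follows from Theorem \ref{OP_j(n,m)GFN} in the same way that Corollary \ref{P_1(n,m)GFN} follows from Theorem \ref{P_j(n,m)GFN}. We specialize \eqref{Main_GFN2} to $j=1$, simplify the prefactor, and then account for the empty overpartition.

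\emph{Simplify the prefactor.} At $j=1$ we have $(-q;q)_{0}=1$ and $(\frac{w-2}{w};q)_1 = 1-\frac{w-2}{w} = \frac{2}{w}$. The prefactor therefore becomes
\begin{align*}
\frac{2w^{-1}}{2/w}=1 .
\end{align*}

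\emph{Simplify the subtracted sum.} The finite sum $\sum_{\ell=0}^{0}$ consists only of its $\ell=0$ term, which equals $1$. Hence \eqref{Main_GFN2} reduces to
\begin{align*}
\sum_{n=1}^{\infty}\sum_{m=0}^{\infty}\overline{P}(n,m)w^mq^n=\frac{((w-2)q;q)_{\infty}}{(wq;q)_{\infty}}-1 .
\end{align*}

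\emph{Add the empty overpartition.} The convention $\overline{P}(0,0)=1$ supplies the missing $n=0$ term. Adding $1$ to both sides gives \eqref{W_GFN_OP}.

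\emph{Possible obstacle.} The only step that needs care is checking that the prefactor $2w^{-j}(-q;q)_{j-1}$ in \eqref{Main_GFN2} collapses exactly to $1$ at $j=1$, which the computation above confirms.

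\emph{Sanity check.} As an independent check, one can build the generating function directly, as in the proof of Theorem \ref{P_j(n,m)GFN}. For each $i<\ell$, a present integer $i$ contributes $\frac{1+q^i}{1-q^i}-1=\frac{2q^i}{1-q^i}$, and a missing one contributes $w$. The largest part $\ell$ contributes $\frac{2q^\ell}{1-q^\ell}$. The product is $w^{\ell}\frac{q^\ell}{(q;q)_\ell}\cdot\frac{2}{w}\left(\frac{(w-2)q}{w};q\right)_{\ell-1}$, which equals $\frac{((w-2)/w;q)_\ell}{(q;q)_\ell}(wq)^\ell$. Summing over $\ell\ge0$ with the $q$-binomial theorem \eqref{q-binomial}, taking $a=\frac{w-2}{w}$ and $z=wq$, gives the same product.
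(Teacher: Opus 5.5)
Your proposal is correct and matches the paper's proof exactly: set $j=1$ in Theorem \ref{OP_j(n,m)GFN}, note that $2w^{-1}/(\frac{w-2}{w};q)_1=1$ and that the subtracted sum equals $1$, then add $\overline{P}(0,0)=1$ to both sides. Your direct sanity check is also valid; it is the paper's proof of Theorem \ref{OP_j(n,m)GFN} at $j=1$, with the $\ell=0$ term (the empty overpartition) included in the $q$-binomial sum.
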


Letting $w=0$ in the above equation leads to

\begin{corollary} Suppose $\overline{P}(n,0)$ is the number of overpartitions of $n$ in which no integer less than the largest part is missing, that is, gap free overpartitions. If we denote this by $\overline{\mathcal{P}}^*(n)$, then $\overline{\mathcal{P}}^*(n)$ is given by
\begin{align*}
\sum_{n=0}^{\infty} \overline{\mathcal{P}}^*(n)q^n=(-2q;q)_{\infty}.
\end{align*}
The right hand side generates the overpartitions into distinct parts, i.e., any integer can occur at most once, either in the non-overlined or the overlined versions, but not both.
\end{corollary}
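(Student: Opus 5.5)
The plan is to set $w=0$ in \eqref{W_GFN_OP} of Corollary \ref{Corollary 3.2}, and then to interpret the resulting product combinatorially.

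First, at $w=0$ the left hand side of \eqref{W_GFN_OP} becomes $\sum_{n\ge0}\overline{P}(n,0)q^n$. This holds because the double series is a power series in $w$ whose $w^0$ coefficient collects exactly the terms with $m=0$. The expansion is legitimate for fixed $|q|<1$: both sides are analytic in $w$ near $0$, since $(wq;q)_\infty$ does not vanish for small $|w|$. On the right hand side, substituting $w=0$ gives $(-2q;q)_\infty/(0;q)_\infty$. Since $(0;q)_\infty=1$, the right hand side is $(-2q;q)_\infty=\prod_{k\ge1}(1+2q^k)$. This proves the displayed identity $\sum_{n\ge0}\overline{\mathcal{P}}^*(n)q^n=(-2q;q)_\infty$.

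For the combinatorial remark, expand each factor as $1+2q^k = 1+q^k+q^k$. The three terms correspond to the three options for the integer $k$: it is absent, it appears once as $k$, or it appears once as $\overline{k}$. Taking the product over all $k$ therefore generates the overpartitions in which each integer occurs at most once, in either its plain or its overlined form but not both. This is the stated interpretation.

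There is no real obstacle here. The only point needing a word of care is justifying that the $w^0$ coefficient can be read off by setting $w=0$ in the generating function, and this follows from the power series/analyticity remark above.

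As a sanity check, I would compare with the gap-free condition directly. In a gap-free overpartition every integer below the largest part occurs, so the count is not literally that of distinct overpartitions; the equality is only between generating functions, which is exactly what the corollary asserts. For example, at $n=2$ we have $\overline{P}(2,0)=4$, coming from $1+1$, $\overline{1}+1$, $2$? No: $2$ misses $1$. The gap-free ones are $1+1$ and $\overline{1}+1$. The coefficient of $q^2$ in $(1+2q)(1+2q^2)\cdots$ is $2$, which is consistent with this.
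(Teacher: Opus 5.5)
Your proposal is correct and follows the paper's argument: the paper also just sets $w=0$ in \eqref{W_GFN_OP}, using $(0;q)_\infty=1$. Your reading of each factor $1+2q^k$ as ``$k$ absent, $k$ once, or $\overline{k}$ once'' supplies the justification for the combinatorial remark, which the paper leaves implicit. One cosmetic fix: delete the stray claim $\overline{P}(2,0)=4$ in your sanity check, since the correct value (as you then find) is $2$, which matches the $q^2$ coefficient of $(-2q;q)_\infty$.
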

For instance, the gap free overpartitions of $5$ are $2+2+1, \ \overline{2} + 2 + 1, \ 2+2+\overline{1}, \ \overline{2} + 2 + \overline{1}, \ 2+1+1+1, \ \overline{2}+2+2+1, \ 2+1+1+\overline{1}, \ \overline{2}+1+1+\overline{1}, \ 1+1+1+1+1, \ \overline{1}+1+1+1+1$, totalling 10 in number. There are 10 overpartitions of $5$ into distinct parts, namely, $5,\ \overline{5}, \ 4+1, \ \overline{4}+1, \ 4+\overline{1}, \ \overline{4} + \overline{1}, \ 3+2, \ \overline{3}+2, \ 3+\overline{2}, \ \overline{3} + \overline{2}$. Differentiating \eqref{W_GFN_OP} with respect to $w$ and putting $w=0$ extracts the coefficients $\overline{P}(n,1)$.
\begin{corollary}\label{overponemissing}
We have
\begin{align*}
\sum_{n=0}^{\infty}\overline{P}(n,1)q^n=(-2q;q)_{\infty}\sum_{k=1}^{\infty}\frac{2q^{2k}}{1+2q^k}.
\end{align*}
Here $\overline{P}(n,1)$ is the number of overpartitions of $n$ in which exactly one integer is missing. This quantity is equinumerous with the number of overpartitions in which exactly one integer occurs twice and every other integer occurs only once, either in the non-overlined or overlined versions, but not both.
\end{corollary}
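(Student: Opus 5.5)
The argument mirrors the proof of Corollary \ref{P_1(n,1)GFN}. We differentiate \eqref{W_GFN_OP} with respect to $w$ and then set $w=0$. On the left hand side this extracts exactly $\sum_{n\ge0}\overline{P}(n,1)q^n$. On the right we use logarithmic differentiation of $F(w)=\frac{((w-2)q;q)_\infty}{(wq;q)_\infty}$:
\begin{align*}
\frac{F'(w)}{F(w)}=-\sum_{k=1}^{\infty}\frac{q^k}{1-(w-2)q^k}+\sum_{k=1}^{\infty}\frac{q^k}{1-wq^k}.
\end{align*}
At $w=0$ we have $F(0)=(-2q;q)_\infty$, so
\begin{align*}
F'(0)=(-2q;q)_\infty\sum_{k=1}^{\infty}\left(q^k-\frac{q^k}{1+2q^k}\right)=(-2q;q)_\infty\sum_{k=1}^{\infty}\frac{2q^{2k}}{1+2q^k}.
\end{align*}
This is the stated identity. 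Termwise differentiation is justified for $|q|<1$ and $w$ near $0$, since the products converge uniformly there.

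For the combinatorial statement we absorb the factor $1+2q^k$ into the product and write the right hand side as
\begin{align*}
\sum_{k=1}^{\infty}2q^{2k}\prod_{\substack{i=1\\ i\neq k}}^{\infty}(1+2q^i).
\end{align*}
Each factor $1+2q^i$ with $i\neq k$ generates the choice of omitting $i$, or using it once as $i$ or as $\overline{i}$. These are precisely the overpartitions into distinct parts from the previous corollary, restricted to parts different from $k$. The factor $2q^{2k}$ accounts for $k$ occurring exactly twice: an overpartition may overline only the first occurrence, so the two options are $k+k$ and $\overline{k}+k$. Summing over $k$ therefore counts overpartitions in which exactly one integer occurs twice and every other integer occurs at most once, overlined or not but not both.

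The only delicate point is the bookkeeping in this interpretation. We must check that "occurs twice" permits exactly two overline patterns and not three, which gives the coefficient $2$ rather than $3$ and matches the factor $2$ in $2q^{2k}$. The analytic part is routine and parallels the unrestricted case.
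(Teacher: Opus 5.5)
Your proposal is correct and follows essentially the same route as the paper. You differentiate \eqref{W_GFN_OP} in $w$ (your logarithmic derivative is the same computation as the paper's product-rule expansion) and set $w=0$ to obtain $(-2q;q)_\infty\sum_{k\ge1}2q^{2k}/(1+2q^k)$; you then read off the combinatorics from $\sum_{k}2q^{2k}\prod_{i\neq k}(1+2q^i)$, with the two admissible patterns $k+k$ and $\overline{k}+k$, exactly as the paper does.
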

As an example, there are six overpartitions of $4$ with exactly one integer missing, namely, $3+1, \ \overline{3}+1, \ 3+\overline{1}, \ \overline{3}+\overline{1}, \ 2+2, \ \overline{2}+2$ and there are again six overpartitions in which exactly one part occurs twice, with every other part occurring only once, namely, $2+2, \ \overline{2}+2, \ \overline{2}+1+1, \ 2+1+1, \ \overline{2}+\overline{1}+1, \ 2+\overline{1}+1$.
Going back to \eqref{W_GFN_OP} and letting $w=1$, we simply obtain the generating function for overpartitions, $\frac{(-q;q)_{\infty}}{(q;q)_{\infty}}$, as expected. But substituting $w=-1$ provides a new identity.
\begin{corollary}
\begin{align*}
\sum_{n=0}^{\infty}\sum_{m=0}^{\infty} \overline{P}(n,m)(-1)^mq^n=\sum_{n=0}^{\infty}\left(\overline{\mathcal{M}}_e(n)-\overline{\mathcal{M}}_o(n)\right)q^n=\frac{\big(-3q;q\big)_{\infty}}{(-q;q)_{\infty}}. \label{A268499}
\end{align*}
\end{corollary}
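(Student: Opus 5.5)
The plan is to substitute $w=-1$ into Corollary \ref{Corollary 3.2}, in the same way the analogous corollary for unrestricted partitions was obtained from \eqref{Main_GFN}. First we interpret the left-hand side. Define
\begin{align*}
\overline{\mathcal{M}}_e(n)=\sum_{k\ge 0}\overline{P}(n,2k), \qquad \overline{\mathcal{M}}_o(n)=\sum_{k\ge 1}\overline{P}(n,2k-1),
\end{align*}
which count overpartitions of $n$ with an even, respectively odd, number of missing integers. For fixed $n$ only finitely many $m$ give $\overline{P}(n,m)\neq 0$: an overpartition of $n$ has largest part at most $n$, so it has at most $n-1$ missing integers. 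Hence the double sum $\sum_{n}\sum_m \overline{P}(n,m)w^mq^n$ is, for each fixed power of $q$, a polynomial in $w$. Setting $w=-1$ is therefore legitimate termwise, and $\sum_m \overline{P}(n,m)(-1)^m=\overline{\mathcal{M}}_e(n)-\overline{\mathcal{M}}_o(n)$. This gives the first equality.

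For the second equality we evaluate the right-hand side of \eqref{W_GFN_OP} at $w=-1$. The numerator $((w-2)q;q)_\infty$ becomes $(-3q;q)_\infty$ and the denominator $(wq;q)_\infty$ becomes $(-q;q)_\infty$, which is exactly the claimed product.

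The only point needing care is that the identity \eqref{W_GFN_OP} holds as an identity of formal power series in $q$ whose coefficients are polynomials in $w$. Then specializing $w=-1$ commutes with taking coefficients of $q^n$. One can see this by expanding $((w-2)q;q)_\infty/(wq;q)_\infty$ with the $q$-binomial theorem \eqref{q-binomial}, taking $a=(w-2)/w$ and $z=wq$. Alternatively, note directly that the coefficient of $q^n$ in each product is a polynomial in $w$, so there is no obstacle for $|q|<1$. I expect no genuine difficulty beyond this bookkeeping.
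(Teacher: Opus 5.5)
Your proposal is correct and follows the paper's own argument: the corollary comes from setting $w=-1$ in \eqref{W_GFN_OP}, which turns $((w-2)q;q)_\infty/(wq;q)_\infty$ into $(-3q;q)_\infty/(-q;q)_\infty$. Your additional observations, that for fixed $n$ only finitely many $m$ occur and that each coefficient of $q^n$ is a polynomial in $w$, make explicit the justification for the specialization that the paper leaves unstated.
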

The left hand side above is the generating function for the number of overpartitions with an even number of missing integers, denoted by $\overline{\mathcal{M}}_e(n)$, minus the number of overpartitions with an odd number of missing integers, denoted by $\overline{\mathcal{M}}_o(n)$. This sequence $\overline{\mathcal{M}}_e(n) - \overline{\mathcal{M}}_o(n)$ can be found in OEIS [A268499].
From the OEIS webpage, we note that the coefficients of this sequence seem positive for $n>27$. Hence we conjecture the following `bias type' result analogous to the one for unrestricted partitions.
\begin{conjecture} For any positive integer $n>27$, we have 
\begin{align*}
\overline{\mathcal{M}}_e(n)>\overline{\mathcal{M}}_o(n).
\end{align*}
In other words, the number of overpartitions of $n$ with an even number of missing integers is always more than the number of overpartitions of $n$ with an odd number of missing integers for $n>27$.
\end{conjecture}
On similar lines to the unrestricted partitions, the following congruence result holds in the case of overpartititons.
\begin{theorem} \label{OP_1(n,m)(w=-1)GFN}
For $n \in \mathbb{N}$, we have
\begin{align*}
\overline{\mathcal{M}}_e(n)-\overline{\mathcal{M}}_o(n)\equiv
    \begin{cases}
            0 \quad(\text{mod 4}), &         \text{if n is not a square},\\
            2\quad(\text{mod 4}), &         \text{if n is a square}.
    \end{cases}
\end{align*}
\end{theorem}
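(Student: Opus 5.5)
The plan is to follow the proof of Theorem \ref{P_1(n,m)(w=-1)GFN} almost verbatim, working with the generating function from the corollary obtained by setting $w=-1$ in \eqref{W_GFN_OP}:
\begin{align*}
\sum_{n=0}^{\infty}\left(\overline{\mathcal{M}}_e(n)-\overline{\mathcal{M}}_o(n)\right)q^n=\frac{(-3q;q)_{\infty}}{(-q;q)_{\infty}}.
\end{align*}
First, I will apply Euler's identity \eqref{euler} to rewrite $1/(-q;q)_{\infty}$ as $(q;q^2)_{\infty}$. This turns the right-hand side into the product $(-3q;q)_{\infty}(q;q^2)_{\infty}$. Both factors are power series in $q$ with integer coefficients.

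The key step is the reduction modulo $4$. Since $3\equiv -1 \pmod 4$, each factor satisfies $1+3q^n\equiv 1-q^n \pmod 4$. Hence $(-3q;q)_{\infty}\equiv (q;q)_{\infty}\pmod 4$, coefficientwise. This is legitimate because the coefficient of $q^N$ in the infinite product involves only the finitely many factors with $n\le N$, and congruences between integer polynomials are preserved under multiplication. It follows that
\begin{align*}
\sum_{n=0}^{\infty}\left(\overline{\mathcal{M}}_e(n)-\overline{\mathcal{M}}_o(n)\right)q^n\equiv (q;q)_{\infty}(q;q^2)_{\infty}=(q;q^2)_{\infty}^2(q^2;q^2)_{\infty}\pmod 4,
\end{align*}
where the equality uses $(q;q)_{\infty}=(q;q^2)_{\infty}(q^2;q^2)_{\infty}$.

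Next, I will invoke the Jacobi triple product at $z=-1$, in the form \eqref{JTPI} as used in the proof of Theorem \ref{P_1(n,m)(w=-1)GFN}, which identifies the last product with $1+2\sum_{n\ge1}(-1)^nq^{n^2}$. Comparing coefficients of $q^n$ for $n\ge 1$ then finishes the argument. When $n$ is not a square the coefficient is $0$. When $n$ is a square the coefficient is $\pm 2$, and $\pm 2\equiv 2 \pmod 4$.

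There is no real obstacle. The only point requiring care is justifying the termwise congruence for the infinite product, which the finiteness remark above handles. It is also worth noting that, unlike the mod $3$ case, the sign $(-1)^n$ disappears modulo $4$, which is why the statement does not distinguish odd squares from even squares.
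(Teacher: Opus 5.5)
Your proposal is correct and matches the paper's proof step for step: Euler's identity, the reduction $(-3q;q)_\infty\equiv(q;q)_\infty \pmod 4$, the factorization into $(q;q^2)_\infty^2(q^2;q^2)_\infty$, and the Jacobi triple product at $z=-1$, which gives $1+2\sum_{n\ge1}(-1)^nq^{n^2}$. Your finiteness remark justifying the coefficientwise congruence for the infinite product is a small justification the paper leaves implicit.
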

It may be observed that for $n \in \mathbb{N}$, $\overline{\mathcal{M}}_e(n)-\overline{\mathcal{M}}_o(n)$ is always even because the number of overpartitions of $n$, namely, $\overline{p}(n) = \overline{\mathcal{M}}_e(n) + \overline{\mathcal{M}}_o(n) = \overline{\mathcal{M}}_e(n) - \overline{\mathcal{M}}_o(n) + 2 \overline{\mathcal{M}}_o(n)$ is always even. This is because each unrestricted partition of $n$ gives rise to $2^{\nu(\pi)}$ many overpartitions ($\nu(\pi)$ being the number of distinct integers that appear as parts in $\pi$), and since $\nu(\pi) \geq 1$ for each $\pi$, we have that $\overline{p}(n) = \sum_{\pi \in \mathcal{P}(n)} 2^{\nu(\pi)}$ is an even number. The above result is stronger as it characterizes the parity of $\frac{\overline{\mathcal{M}}_e(n)-\overline{\mathcal{M}}_o(n)}{2}$.
\subsection{Proofs of the main results}
\begin{proof}[Theorem \textup{\ref{OP_j(n,m)GFN}}][]
We begin by writing the product 
\begin{align*}
\frac{(-q;q)_{j-1}}{(q;q)_{j-1}}\prod_{k=j}^{\ell-1}\left(\frac{1+q^k}{1-q^k}-1+w \right)\frac{2q^\ell}{1-q^\ell}.
\end{align*}
In the above product, we do not track the missing integers less than some fixed integer $j$ but only those which are greater than or equal to $j$, but less than the largest part $\ell$ in the overpartition. Note that the $w$ in the term $\frac{1+q^k}{1-q^k}-1+w$ encodes the situation where $k$ appears in neither the non-overlined nor overlined versions, i.e., when $k$ is missing from the overpartition. A closed form of the above product may be written as
\begin{align*}
\frac{2w^{\ell-j}q^\ell(-q;q)_{j-1}}{(q;q)_{\ell}}\left(\frac{w-2}{w}q^j;q\right)_{\ell-j}.
\end{align*}.
Now, multiplying and dividing by $(\frac{w-2}{w};q)_j$ and summing over $\ell$ from $j$ to $\infty$ we arrive at 
\begin{align*}
\frac{2w^{-j}(-q;q)_{j-1}}{(\frac{w-2}{w};q)_j}\sum_{\ell=j}^{\infty}\frac{(\frac{w-2}{w};q)_{\ell}}{(q;q)_{\ell}}(wq)^{\ell}.
\end{align*}
The sum appearing in the above expression is a tail of the sum appearing in the $q$-binomial theorem \eqref{q-binomial}, where $a=\frac{w-2}{w}$ and $z=wq$. So we may rewrite it as 
\begin{align*}
\frac{2w^{-j}(-q;q)_{j-1}}{(\frac{w-2}{w};q)_j}\left(\frac{(w-2)q;q)_{\infty}}{(wq;q)_{\infty}}-\sum_{\ell=0}^{j-1}\frac{(\frac{w-2}{w};q)_{\ell}}{(q;q)_\ell}(wq)^\ell\right),
\end{align*} 
and this completes the proof.
\end{proof}
\begin{proof}[Corollary \textup{\ref{Corollary 3.2}}][]
Set $j=1$ in \eqref{Main_GFN2} to get
\begin{align*}
\sum_{n=1}^{\infty}\sum_{m=0}^{\infty} \overline{P}(n,m)w^mq^n &=\frac{2w^{-1}}{(\frac{w-2}{w};q)_1}\left(\frac{((w-2)q;q)_{\infty}}{(wq;q)_{\infty}}-\sum_{\ell=0}^{0}\frac{(\frac{w-2}{w};q)_{\ell}}{(q;q)_\ell}(wq)^\ell\right) \\
&= \frac{2w^{-1}}{1 - \frac{w-2}{w}} \left(\frac{((w-2)q;q)_{\infty}}{(wq;q)_{\infty}} - 1 \right)  \\
&= \frac{((w-2)q;q)_{\infty}}{(wq;q)_{\infty}} - 1.
\end{align*}
Add $1 = \overline{P}(0, 0)$ on both sides to get the required result.
\end{proof}
\begin{proof}[Corollary \textup{\ref{overponemissing}}][]
Differentiate both sides of \eqref{W_GFN_OP} with respect to $w$ to obtain
\begin{align*}
\frac{d}{dw} \frac{((w-2)q; q)_{\infty}}{(wq; q)_{\infty}}&= \frac{1}{(wq; q)_{\infty}} \times ((w-2)q; q)_{\infty} \sum_{n=1}^{\infty}\frac{-q^n}{1 - (w-2) q^n} \\
&+ ((w-2)q; q)_{\infty} \times \frac{-1}{(wq; q)_{\infty}^2} \times (wq; q)_{\infty} \times \sum_{n=1}^{\infty} \frac{-q^n}{1 - wq^n} \\
&= -\frac{((w-2)q; q)_{\infty}}{(wq; q)_{\infty}}\sum_{n=1}^{\infty}\frac{q^n}{1 - (w-2) q^n} + \frac{((w-2)q; q)_{\infty}}{(wq; q)_{\infty}} \sum_{n=1}^{\infty} \frac{q^n}{1 - wq^n}.
\end{align*}
Now, we set $w=0$ and get
\begin{align*}
&-(-2q; q)_{\infty}\sum_{n=1}^{\infty} \frac{q^n}{1 + 2q^n} + (-2q; q)_{\infty}\sum_{n=1}^{\infty}q^n \\
&  = (-2q; q)_{\infty}\sum_{n=1}^{\infty}\frac{2q^{2n}}{1 + 2q^n}.
\end{align*}
For the combinatorial interpretation stated in the corollary, we observe that
\begin{equation*}
(-2q; q)_{\infty}\sum_{n=1}^{\infty}\frac{2q^{2n}}{1 + 2q^n} = \sum_{n=1}^{\infty} 2q^{2n} \prod_{\substack{k=1 \\ k \neq n}}^{\infty} (1+2q^k).
\end{equation*}
The general term on the right hand side above, $2q^{2n} \prod_{\substack{k=1 \\ k \neq n}}^{\infty} (1+2q^k)$, generates the overpartitions where $n$ occurs exactly twice (either both non-overlined or one overlined and non-overlined) and all other parts are distinct, occuring at most once either as an overlined or a non-overlined part but not both. Thus, the sum over all positive integers $n$ generates the required type of overpartitions.
\end{proof}
\begin{proof}[Theorem \textup{\ref{OP_1(n,m)(w=-1)GFN}}][]
We transform the generating function as follows:
\begin{align}
\sum_{n=0}^{\infty}\left(\overline{\mathcal{M}}_e(n)-\overline{\mathcal{M}}_o(n)\right)q^n&=\frac{\big(-3q;q\big)_{\infty}}{(-q;q)_{\infty}}\nonumber\\&=\big(-3q;q\big)_{\infty}(q;q^2)_{\infty} \quad (\text{by \eqref{euler}})\nonumber\\
&\equiv \big(q;q\big)_{\infty}(q;q^2)_{\infty} \quad\text{(mod 4)}\nonumber\\
&\equiv \big(q;q^2\big)^2_{\infty}(q^2;q^2)_{\infty} \quad\text{(mod 4)}.\label{J-product form mod 4}
\end{align}
Here, an application of \eqref{JTPI} gives us
\begin{align*}
\big(q;q^2\big)^2_{\infty}(q^2;q^2)_{\infty}=\sum_{n=-\infty}^{\infty}(-1)^n q^{n^2}=1+2\sum_{n=1}^{\infty}(-1)^n q^{n^2}.
\end{align*}
Therefore, \eqref{J-product form mod 4} becomes
\begin{align*}
\sum_{n=0}^{\infty}\left(\overline{\mathcal{M}}_e(n)-\overline{\mathcal{M}}_o(n)\right)q^n&\equiv \big(q;q^2\big)^2_{\infty}(q^2;q^2)_{\infty} \quad\text{(mod 4)}\\ & = 1+2\sum_{n=1}^{\infty}(-1)^n q^{n^2}\quad\text{(mod 4)}\\ & \equiv 1+2\sum_{n=1}^{\infty}q^{n^2}\quad\text{(mod 4)}.
\end{align*}
Now comparing the coefficients of $q^n$, for $n\geq 1$, on both sides in the above expression, we find that only square powers survive modulo 4 and have a coefficient of 2 associated with them. Thus, the result follows.
\end{proof}

\section{Concluding Remarks}
In this paper, we studied the missing integers in unrestricted partitions and overpartitions. More specifically, we gave the generating function for the number of partitions with a given number, $m$, of missing integers, namely, $P(n, m)$. As a special case, we found a partition identity for $P(n, 1)$, that is, Corollary \ref{P_1(n,1)GFN}, whose bijective proof would be interesting to get. We then considered the parity of the number of missing integers and found the generating function as well as a congruence modulo $3$ for $\mathcal{M}_e(n) - \mathcal{M}_o(n)$. A combinatorial explanation of this congruence, namely Theorem \ref{P_1(n,m)(w=-1)GFN}, would be fascinating. We also posed a few conjectural inequalities regarding the relation between $\mathcal{M}_o(n)$ and $\mathcal{M}_e(n)$. A bijective proof of \eqref{QGFNmulti<k} in Theorem \ref{Multiplicity<k}, which talks about, in more generality, about the number of missing integers in all partitions of $n$, would be highly desired. We also examined similar problems for overpartitions and combinatorial proofs, in this case too, are sought after.

We now propose some additional questions and indicate directions for possible exploration.

\noindent \textbf{Question 1.} Do there exist congruences for $\mathcal{M}_e(n)-\mathcal{M}_o(n)$ and $\overline{\mathcal{M}}_e(n)-\overline{\mathcal{M}}_o(n)$ for moduli other than $3$ and $4$?\\
\textbf{Question 2.} Determine the asymptotic formulae and the Hardy-Ramanujan-Rademacher type exact formulae for $\mathcal{M}_e(n)-\mathcal{M}_o(n)$ and $\overline{\mathcal{M}}_e(n)-\overline{\mathcal{M}}_o(n)$, that is for the $q$-products $\frac{\big(-2q;q\big)_{\infty}}{(-q;q)_{\infty}}$ and $\frac{\big(-3q;q\big)_{\infty}}{(-q;q)_{\infty}}$. \\
Although the coefficient sequences for the generating functions $\frac{\big(-2q;q\big)_{\infty}}{(-q;q)_{\infty}}$ and $\frac{\big(-3q;q\big)_{\infty}}{(-q;q)_{\infty}}$ were already recorded in the OEIS[A268498, A268499], their interpretation in terms of \emph{missing integers} in partitions is new to the best of our knowledge. Interestingly, the sequences quoted above are consecutive sequences in OEIS. \\
\textbf{Question 3.} Is there an overpartition analogue of Theorem \ref{Multiplicity<k}, namely, a concise identity for the number of integers less than or equal to the largest part that appear less than $k$ times in all overpartitions of $n$.


%
%
%

\end{document}